\documentclass[a4paper, 10pt, conference]{ieeeconf}      

\IEEEoverridecommandlockouts                              
\overrideIEEEmargins


\newtheorem{theorem}{Theorem}[section]

\newtheorem{lemma}[theorem]{Lemma}

\newtheorem{definition}[theorem]{Definition}
\newtheorem{remark}[theorem]{Remark}

\newtheorem{algorithm}{\normalfont\textsc{Algorithm}}
\newtheorem{assumption}{Assumption A.\!\!}

\newtheorem{scheme}{\noindent\normalfont\textsc{Scheme A.\!\!}}


\newcommand{\h}{\mathcal{H}}

\newcommand{\norm}[1]{\|#1\|}

\renewcommand{\int}{\mathrm{int}}

\usepackage{graphics} 
\usepackage{epsfig} 
\usepackage{mathptmx} 
\usepackage{amsmath} 
\usepackage{amssymb}  

\usepackage{graphicx}      
\usepackage{color}         
\usepackage{ifpdf}
\usepackage{url}

\usepackage[top=2.2cm,bottom=2.8cm,right=1.8cm,left=1.8cm]{geometry}

\usepackage{tikz}
\usetikzlibrary{calc}

\title{\LARGE{\textbf{An Inner Convex Approximation Algorithm for BMI Optimization and Applications in Control}}}

%
%
%
%
%
\author{ \parbox{4 in}{\centering Quoc Tran Dinh$^{\dagger *}$, Wim Michiels$^{\ddagger}$ and Moritz Diehl$^{\dagger}$
         \thanks{$^{\dagger}$Department of Electrical Engineering (ESAT/SCD) and Optimization in Engineering Center (OPTEC), Katholieke Universiteit Leuven,
		Belgium. {\tt\footnotesize{Email: \{quoc.trandinh, moritz.diehl\}@esat.kuleuven.be}}
		\newline{$^{\ddagger}$Department of Computer Science and Optimization in Engineering Center (OPTEC), KU Leuven, Belgium.
{\tt\footnotesize{Email: wim.michiels@cs.kuleuven.be}}}
		\newline{$^{*}$Department of Mathematics-Mechanics-Informatics, Hanoi University of Science, Hanoi, Vietnam.}}
         }}

\begin{document}

\maketitle
\thispagestyle{empty}
\pagestyle{empty}

\begin{abstract}                          
In this work, we propose a new local optimization method to solve a class of nonconvex semidefinite programming (SDP) problems. The basic idea is to 
approximate the feasible set of the nonconvex SDP problem by inner positive semidefinite convex approximations via a parameterization technique. This leads to
an iterative procedure to search a local optimum of the nonconvex problem. The convergence of the algorithm is analyzed under mild assumptions. 
Applications in static output feedback control are benchmarked and numerical tests are implemented based on the data from the COMPL$_{\mathrm{e}}$ib library.
\end{abstract}


\section{Introduction}
We are interested in the following nonconvex semidefinite programming problem:
\begin{equation}\label{eq:NSDP}
\left\{\begin{array}{cl}
\displaystyle\min_{x\in\mathbb{R}^n} & f(x)\\
\textrm{s.t.} & F_i(x) \preceq 0, ~~ i=1,\dots, m, \\
&x \in \Omega,
\end{array}\right.\tag{$\mathrm{NSDP}$}
\end{equation}
where $f : \mathbb{R}^n\to\mathbb{R}$ is convex, $\Omega$ is a nonempty, closed convex set in $\mathbb{R}^n$ and $F_i
:\mathbb{R}^n\to \mathcal{S}^{p_i}$ ($i=1,\dots, m$) are nonconvex matrix-valued mappings and smooth.
The notation $A\preceq 0$ means that $A$ is a symmetric negative semidefinite matrix.
Optimization problems involving matrix-valued mapping inequality constraints have large number of applications in static output feedback controller
design and topology optimization, see, e.g. \cite{Boyd1994,Kocvara2005,Leibfritz2004,String2009}. 
Especially, optimization problems with bilinear matrix inequality (BMI) constraints have been known to be nonconvex and NP-hard~\cite{Blondel1997}. Many
attempts have been done to solve these problems by employing convex semidefinite programming (in particular, optimization with linear matrix inequality (LMI)
constraints) techniques~\cite{Correa2004,Freund2007,Kocvara2005,Leibfritz2002,Thevenet2006}. The methods developed in those papers are based on
augmented
Lagrangian functions, generalized sequential semidefinite programming and alternating directions. Recently, we proposed a new method based on convex-concave
decomposition of the BMI constraints and linearization technique \cite{TranDinh2011}. The method exploits the convex substructure of the problems.
It was shown that this method can be applied to solve many problems arising in static output feedback control including spectral abscissa, $\h_2$, $\h_{\infty}$
and mixed $\h_2/\h_{\infty}$ synthesis problems.  

In this paper, we follow the same line of the work in \cite{Beck2010,Marks1978,TranDinh2011} to develop a new local optimization method for solving the
nonconvex semidefinite programming problem \eqref{eq:NSDP}. 
The main idea is to approximate the feasible set of the nonconvex problem by a sequence of inner positive semidefinite convex approximation sets.
This method can be considered as a generalization of the ones in \cite{Beck2010,Marks1978,TranDinh2011}.

\vskip0.1cm
\noindent\textit{Contribution. } 
The contribution of this paper can be summarized as follows:
\begin{itemize}
\item[$\mathrm{1.}$] We generalize the inner convex approximation method in \cite{Beck2010,Marks1978} from scalar optimization to nonlinear semidefinite
programming. 
Moreover, the algorithm is modified by using a \textit{regularization technique} to ensure strict descent. 
The advantages of this algorithm are that it is \textit{very simple to implement} by employing available standard semidefinite programming software tools and
\textit{no globalization strategy} such as a line-search procedure is needed.
\item[$\mathrm{2.}$] We prove the convergence of the algorithm to a stationary point under mild conditions.
\item[$\mathrm{3.}$] We provide two particular ways to form an overestimate for bilinear matrix-valued mappings and then show many applications in static
output feedback.
\end{itemize}

\vskip0.1cm
\noindent\textit{Outline. } 
The next section recalls some definitions, notation and properties of matrix operators and defines an inner convex approximation of a BMI
constraint. Section \ref{sec:alg_and_conv} proposes the main algorithm and investigates its convergence properties. Section \ref{sec:app} shows the applications
in static output feedback control and numerical tests. Some concluding remarks are given in the last section.

\section{Inner convex approximations}\label{sec:inner_cv_app}
In this section, after given an overview on concepts and definitions related to matrix operators, we provide a definition of inner positive semidefinite convex
approximation of a nonconvex set.

\subsection{Preliminaries}
Let $\mathcal{S}^p$ be the set of symmetric matrices of size $p\times p$, $\mathcal{S}^p_{+}$, and resp., $\mathcal{S}^p_{++}$ be the set of symmetric positive
semidefinite, resp.,  positive definite matrices. For
given matrices $X$ and $Y$ in $\mathcal{S}^p$, the relation $X\succeq Y$ (resp., $X\preceq Y$) means that $X-Y\in\mathcal{S}^p_{+}$ (resp.,
$Y-X\in\mathcal{S}^p_{+}$) and $X\succ Y$ (resp., $X\prec
Y$)
is $X-Y\in\mathcal{S}^p_{++}$ (resp., $Y-X\in\mathcal{S}^p_{++}$). The quantity $X\circ Y := \textrm{trace}(X^TY)$ is an inner product of two matrices $X$ and
$Y$ defined on $\mathcal{S}^p$, where $\textrm{trace}(Z)$ is the trace of matrix $Z$.
For a given symmetric matrix $X$, $\lambda_{\min}(X)$ denotes the smallest eigenvalue of $X$.

\begin{definition}\label{de:psd_convex}\cite{Shapiro1997}
A matrix-valued mapping $F : \mathbb{R}^n\to\mathcal{S}^p$ is said to be positive semidefinite convex (\textit{psd-convex}) on a convex subset
$C\subseteq\mathbb{R}^n$ if for all $t\in[0,1]$ and
$x,y\in C$, one has
\begin{equation}\label{eq:psd_convex}
F(tx+(1-t)y) \preceq tF(x) + (1-t)F(y).
\end{equation}
If \eqref{eq:psd_convex} holds for $\prec$ instead of $\preceq$ for $t\in (0,1)$ then $F$ is said to be \textit{strictly psd-convex} on $C$.
In the opposite case, $F$ is said to be \textit{psd-nonconvex}.
Alternatively, if we replace $\preceq$ in \eqref{eq:psd_convex} by $\succeq$ then $F$ is said to be psd-concave on $C$.
It is obvious that any convex function $f : \mathbb{R}^n\to\mathbb{R}$ is psd-convex with $p=1$.
\end{definition}
A function $f :\mathbb{R}^n\to\mathbb{R}$ is said to be \textit{strongly convex} with parameter $\rho > 0$ if $f(\cdot) - \frac{1}{2}\rho\norm{\cdot}^2$ is
convex. The notation $\partial{f}$ denotes the subdifferential of a convex function $f$. For a given convex set $C$, $\mathcal{N}_C(x) := \left\{ w
~|~ w^T(x-y) \geq 0, ~y\in C\right\}$ if $x\in C$ and $\mathcal{N}_C(x) := \emptyset$ if $x\notin C$ denotes the normal cone of $C$ at $x$.


The derivative of a matrix-valued mapping $F$ at $x$ is a linear mapping $DF$ from $\mathbb{R}^n$ to $\mathbb{R}^{p\times p}$ which is defined by
\begin{equation*}
DF(x)h := \sum_{i=1}^nh_i\frac{\partial{F}}{\partial x_i}(x), ~\forall h\in \mathbb{R}^n.
\end{equation*}
For a given convex set $X\in\mathbb{R}^n$, the matrix-valued mapping $G$ is said to be differentiable on a subset $X$ if its derivative $DF(x)$ exists at every
$x\in X$. The definitions
of the second order derivatives of matrix-valued mappings can be found, e.g., in \cite{Shapiro1997}.
Let $A : \mathbb{R}^n\to \mathcal{S}^p$ be a linear mapping defined as $Ax := \sum_{i=1}^nx_iA_i$, where $A_i \in \mathcal{S}^p$ for $i=1,\dots, n$.
The adjoint operator of $A$, $A^{*}$,  is defined as $A^{*}Z := (A_1\circ Z, A_2\circ Z, \dots, A_n\circ Z)^T$ for any $Z\in\mathcal{S}^p$.

Finally, for simplicity of discussion, throughout this paper, we assume that all the functions and matrix-valued mappings are \textit{twice differentiable} on
their domain. 

\subsection{Psd-convex overestimate of a matrix operator}
Let us first describe the idea of the inner convex approximation for the scalar case. 
Let $f :\mathbb{R}^n\to\mathbb{R}$ be a continuous nonconvex function. 
A convex function $g(\cdot;y)$ depending on a parameter $y$  is called a convex overestimate of $f(\cdot)$ w.r.t. the parameterization $y := \psi(x)$  if
$g(x, \psi(x)) = f(x)$ and $f(z) \leq g(z; y)$ for all $y, z$.
Let us consider two examples.

\vskip0.1cm
\noindent\textit{Example 1. }
Let $f$  be a continuously differentiable function and its gradient $\nabla{f}$ is Lipschitz continuous with a Lipschitz constant
$L_f>0$, i.e. $\norm{\nabla{f}(y)-\nabla{f}(x)}\leq L\norm{y-x}$ for all $x, y$. Then, it is well-known that $|f(z) - f(x) - \nabla{f}(x)^T(z-x)| \leq
\frac{L_f}{2}\norm{z-x}^2$. Therefore, for any $x, z$ we have $f(z) \leq g(z;x)$ with $g(z;x) := f(x) + \nabla{f}(x)^T(z-x) + \frac{L_f}{2}\norm{z-x}^2$.
Moreover, $f(x) = g(x;x)$ for any $x$.
We conclude that $g(\cdot;x)$ is a convex overestimate of $f$ w.r.t the parameterization $y = \psi(x) = x$.
Now, if we fix $x=\bar{x}$ and find a point $v$ such that $g(v;\bar{x}) \leq 0$ then $f(v) \leq 0$. Consequently if the set $\{x ~|~ f(x) < 0\}$ is nonempty,
we can find a point $v$ such that $g(v;\bar{x}) \leq 0$. The convex set $\mathcal{C}(x) := \{z ~|~ g(z;x)\leq 0\}$ is called an inner convex approximation
of $\{z~|~f(z)\leq 0\}$.

\vskip0.1cm
\noindent\textit{Example 2. }\cite{Beck2010} 
We consider the function $f(x) = x_1x_2$ in $\mathbb{R}^2$. The function $g(x, y) = \frac{y}{2}x_1^2 +
\frac{1}{2y}x_2^2$ is a convex overestimate of $f$ w.r.t. the parameterization $y = \psi(x) = x_1/x_2$ provided that $y > 0$.
This example shows that the mapping $\psi$ is not always identity.

Let us generalize the convex overestimate concept to matrix-valued mappings.

\begin{definition}\label{def:over_relaxation}
Let us consider a psd-nonconvex matrix mapping $F : \mathcal{X}\subseteq\mathbb{R}^n\to\mathcal{S}^p$. A psd-convex matrix mapping
$G(\cdot; y)$ is said to be a psd-convex overestimate of $F$ w.r.t. the parameterization $y := \psi(x)$ if $G(x; \psi(x)) = F(x)$ and $F(z) \preceq
G(z;y)$ for all $x, y$ and $z$ in $\mathcal{X}$.
\end{definition}
Let us provide two important examples that satisfy Definition \ref{def:over_relaxation}.

\noindent\textit{Example 3. } 
Let $\mathcal{B}_Q(X, Y) = X^TQ^{-1}Y + Y^TQ^{-1}X$ be a bilinear form with $Q = Q_1 + Q_2$, $Q_1\succ 0$ and $Q_2\succ 0$ arbitrarily, where $X$ and $Y$ are
two $n\times p$ matrices.
We consider the parametric quadratic form:
\begin{align}\label{eq:G_Q}
\mathcal{Q}_{Q}(X, Y;\bar{X},\bar{Y}) & :={\!\!} (X \!-\!\bar{X})^T\!\!Q_1^{-1}(X \!-\!\bar{X})\!+\! (Y \!-\!\bar{Y})^T\!\!Q_2^{-1}(Y \!-\!
\bar{Y})\nonumber\\
& +\! \bar{X}^T\!Q^{-1}Y \!+\! \bar{Y}^TQ^{-1}X + X^TQ^{-1}\bar{Y} \\
& +\! Y^TQ^{-1}\bar{X}  - \bar{X}^TQ^{-1}\bar{Y} \!-\! \bar{Y}^TQ^{-1}\bar{X}.\nonumber
\end{align}
One can show that $\mathcal{Q}_Q(X, Y; \bar{X}, \bar{Y})$ is a psd-convex overestimate of $\mathcal{B}_Q(X,Y)$ w.r.t. the parameterization
$\psi(\bar{X},\bar{Y})=(\bar{X},\bar{Y})$.

Indeed, it is obvious that $\mathcal{Q}_Q(\bar{X}, \bar{Y}; \bar{X}, \bar{Y}) = \mathcal{B}_Q(\bar{X}, \bar{Y})$. We only prove the second condition in
Definition \ref{def:over_relaxation}.
We consider the expression $\mathcal{D}_Q := \bar{X}^TQ^{-1}Y + \bar{Y}^TQ^{-1}X + X^TQ^{-1}\bar{Y} + Y^TQ^{-1}\bar{X} - \bar{X}^TQ^{-1}\bar{Y} -
\bar{Y}^TQ\bar{X} - X^TQ^{-1}Y - Y^TQ^{-1}X$. By rearranging this expression, we can easily show that $\mathcal{D}_Q = -(X-\bar{X})^TQ^{-1}(Y-\bar{Y}) -
(Y-\bar{Y})^TQ^{-1}(X-\bar{X})$.  
Now, since $Q = Q_1 + Q_2$, by \cite{Bernstein2005}, we can write:
\begin{eqnarray}\label{eq:lm21_proof1}
-\mathcal{D}_Q &&{\!\!\!\!\!\!\!\!\!\!\!\!}= (X-\bar{X})^T(Q_1+Q_2)^{-1}(Y-\bar{Y}) \nonumber\\
&&{\!\!\!\!\!\!\!\!\!\!\!\!} + (Y-\bar{Y})^T(Q_1+Q_2)^{-1}(X-\bar{X}) \\
&&{\!\!\!\!\!\!\!\!\!\!\!\!} \preceq \! (X \!-\! \bar{X})^TQ_1^{-1}\!(X \!-\!\bar{X}) \!+\! (Y \!-\! \bar{Y})^T\!Q_2^{-1}\!(Y \!-\! \bar{Y}).\nonumber
\end{eqnarray}
Note that $\mathcal{D}_Q = \mathcal{Q}_Q - \mathcal{B}_Q - (X \!-\! \bar{X})^TQ_1^{-1}\!(X \!-\!\bar{X}) \!+\! (Y \!-\! \bar{Y})^T\!Q_2^{-1}\!(Y \!-\!
\bar{Y})$. Therefore, we have $\mathcal{Q}_Q(X, Y;\bar{X}, \bar{Y}) \succeq \mathcal{B}_Q(X, Y)$ for all $X, Y$ and $\bar{X},\bar{Y}$.


\noindent\textit{Example 4. } Let us consider a psd-noncovex matrix-valued mapping $\mathcal{G}(x) := \mathcal{G}_{\mathrm{cvx1}}(x) -
\mathcal{G}_{\mathrm{cvx2}}(x)$, where $\mathcal{G}_{\mathrm{cvx1}}$ and $\mathcal{G}_{\mathrm{cvx2}}$ are two psd-convex matrix-valued mappings
\cite{TranDinh2011}. Now, let $\mathcal{G}_{\mathrm{cvx2}}$ be differentiable and $\mathcal{L}_2(x;\bar{x}) := \mathcal{G}_{\mathrm{cvx2}}(\bar{x}) +
D\mathcal{G}_{\mathrm{cvx2}}(\bar{x})(x-\bar{x})$ be the linearization of $\mathcal{G}_{\mathrm{cvx2}}$ at $\bar{x}$. We define $\mathcal{H}(x;\bar{x}) :=
\mathcal{G}_{\mathrm{cvx1}}(x) - \mathcal{L}_2(x;\bar{x})$. It is not difficult to show that $\mathcal{H}(\cdot;\cdot)$ is a psd-convex overestimate of
$\mathcal{G}(\cdot)$ w.r.t. the parametrization $\psi(\bar{x}) = \bar{x}$.

\begin{remark}\label{re:nonunique_of_BMI_app}
\textit{Example 3} shows that the ``Lipschitz coefficient'' of the approximating function \eqref{eq:G_Q} is $(Q_1,Q_2)$.
Moreover, as indicated by \textit{Examples} 3 and 4, the psd-convex overestimate of a bilinear form is not unique. In practice, it is important to find 
appropriate psd-convex overestimates for bilinear forms to make the algorithm perform efficiently.
Note that the psd-convex overestimate $\mathcal{Q}_{Q}$ of $\mathcal{B}_{Q}$ in \textit{Example 3} may be less conservative than the convex-concave
decomposition in \cite{TranDinh2011} since all the terms in $\mathcal{Q}_{Q}$ are related to $X-\bar{X}$ and $Y-\bar{Y}$ rather than $X$ and $Y$.
\end{remark}


\section{The algorithm and its convergence}\label{sec:alg_and_conv}
Let us recall the nonconvex semidefinite programming problem \eqref{eq:NSDP}. We denote by 
\begin{eqnarray}\label{eq:Feasible_set}
\mathcal{F} := \left\{ x\in\Omega ~|~ F_i(x) \preceq 0, ~i=1,\dots, m\right\},
\end{eqnarray}
the feasible set of \eqref{eq:NSDP} and 
\begin{equation}
\mathcal{F}^0 \!\!:=\! \textrm{ri}(\Omega) \!\cap\! \left\{x \in \mathbb{R}^n \!~|~\! F_i(x) \!\prec\! 0, ~i=1,\dots, m \right\},
\end{equation}
the relative interior of $\mathcal{F}$, where $\textrm{ri}(\Omega)$ is the relative interior of $\Omega$. 
First, we need the following fundamental assumption.

\begin{assumption}\label{as:A1}
The set of interior points $\mathcal{F}^0$ of $\mathcal{F}$ is nonempty.
\end{assumption}
Then, we can write the generalized KKT system of \eqref{eq:NSDP} as follows:
\begin{equation}\label{eq:KKT_sys}
\begin{cases}
0 \in \partial{f}(x) + \sum_{i=1}^mDF_i(x)^{*}W_i + \mathcal{N}_{\Omega}(x),\\
0 \!\succeq\! F_i(x), ~ W_i \!\succeq\! 0, ~ F_i(x) \!\circ\! W_i \!=\! 0, ~i=1,\dots, m.
\end{cases}
\end{equation}
Any point $(x^{*}, W^{*})$ with $W^{*} := (W^{*}_1,\dots, W^{*}_m)$ is called a \textit{KKT point} of \eqref{eq:NSDP}, where $x^{*}$ is called a
\textit{stationary point} and $W^{*}$ is called the corresponding Lagrange multiplier.

\subsection{Convex semidefinite programming subproblem}
The main step of the algorithm is to solve a convex semidefinite programming problem formed at the iteration
$\bar{x}^k\in\Omega$ by using inner psd-convex approximations. This problem is defined as follows:
\begin{equation}\label{eq:convx_subprob}
\left\{\begin{array}{cl}
\displaystyle\min_{x} & f(x) + \frac{1}{2}(x-\bar{x}^k)^TQ_{k}(x-\bar{x}^k)\\
\textrm{s.t.}~ &G_i(x; \bar{y}^k_i) \preceq 0, ~i=1,\dots, m\\
&x \in \Omega. 
\end{array}\right.\tag{$\mathrm{CSDP}(\bar{x}^k)$}
\end{equation}
Here, $Q_k\in\mathcal{S}^n_{+}$ is given and the second term in the objective function is referred to as a regularization term;
$\bar{y}_i^k :=
\psi_i(\bar{x}^k)$ is the parameterization of the convex overestimate $G_i$ of $F_i$.

Let us define by $\mathcal{S}(\bar{x}^k, Q_k)$ the solution mapping of \ref{eq:convx_subprob} depending on the parameters $(\bar{x}^k, Q_k)$. Note that the
problem \ref{eq:convx_subprob} is convex, $\mathcal{S}(\bar{x}^k; Q_k)$ is multivalued and convex.
The feasible set of \ref{eq:convx_subprob} is written as:
\begin{equation}\label{eq:feasible_k}
\mathcal{F}(\bar{x}^k) := \left\{ x\in\Omega ~|~ G_i(x; \psi_i(\bar{x}^k)) \preceq 0, ~i=1,\dots, m\right\}.  
\end{equation}

\subsection{The algorithm}
The algorithm for solving \eqref{eq:NSDP} starts from an initial point $\bar{x}^0\in \mathcal{F}^0$ and generates a sequence $\{\bar{x}^k\}_{k\geq 0}$ by
solving a sequence of convex semidefinite programming subproblems \ref{eq:convx_subprob} approximated at $\bar{x}^k$. More precisely, it is presented in
detail as follows.

\begin{algorithm}[\textit{Inner Convex Approximation}]\label{alg:A1}
\noindent\textbf{Initialization.} Determine an initial point $\bar{x}^0\in\mathcal{F}^0$. Compute $\bar{y}^0_i := \psi_i(\bar{x}^0)$ for $i=1,\dots, m$. Choose
a regularization matrix $Q_0\in\mathcal{S}^n_{+}$. Set $k:=0$.

\noindent\textbf{Iteration $k$ ($k=0,1,\dots$)} Perform the following steps: 
\begin{itemize}
\item[]\textit{Step 1. } For given $\bar{x}^k$, if a given criterion is satisfied then terminate.

\item[]\textit{Step 2. } Solve the convex semidefinite program \ref{eq:convx_subprob} to obtain a solution $\bar{x}^{k+1}$ and the corresponding
Lagrange multiplier $\bar{W}^{k+1}$.
\item[]\textit{Step 3. } Update $\bar{y}_i^{k+1} := \psi_i(\bar{x}^{k+1})$, the regularization matrix $Q_{k+1}\in\mathcal{S}^n_{+}$ (if
necessary). 
Increase $k$ by $1$ and go back to Step 1.
\end{itemize}
\noindent\textbf{End.}
\end{algorithm}

The core step of Algorithm \ref{alg:A1} is Step 2 where a general convex semidefinite program needs to be solved. In practice, this can be done by either
implementing a particular method that exploits problem structures or relying on standard semidefinite programming software tools.
Note that the regularization matrix $Q_k$ can be fixed at $Q_k = \rho I$, where $\rho > 0$ is sufficiently small and $I$ is the identity matrix.
Since Algorithm \ref{alg:A1} generates a feasible sequence $\{\bar{x}^k\}_{k\geq 0}$ to the original problem \eqref{eq:NSDP} and this sequence is
strictly descent w.r.t. the objective function $f$, \textit{no globalization strategy} such as line-search or trust-region is needed.

\subsection{Convergence analysis}
We first show some properties of the feasible set $\mathcal{F}(\bar{x})$ defined by \eqref{eq:feasible_k}. 
For notational simplicity, we use the notation $\norm{\cdot}_Q^2 := (\cdot)^TQ(\cdot)$.

\begin{lemma}\label{le:feasible_set}
Let $\{x^k\}_{k\geq 0}$ be a sequence generated by Algorithm \ref{alg:A1}. Then:
\begin{itemize}
\item[]$\mathrm{a)}$ The feasible set $\mathcal{F}(\bar{x}^k)\subseteq \mathcal{F}$ for all $k \geq 0$.
\item[]$\mathrm{b)}$ It is a feasible sequence, i.e. $\{\bar{x}^k\}_{k\geq 0}\subset\mathcal{F}$.
\item[]$\mathrm{c)}$ $\bar{x}^{k+1} \in \mathcal{F}(\bar{x}^k)\cap \mathcal{F}(\bar{x}^{k+1})$.
\item[]$\mathrm{d)}$ For any $k\geq 0$, it holds that:
\begin{equation*}
f(\bar{x}^{k+1}) \leq f(\bar{x}^k) - \frac{1}{2}\norm{\bar{x}^{k+1} - \bar{x}^k}^2_{Q_k} -
\frac{\rho_f}{2}\norm{\bar{x}^{k+1}-\bar{x}^k}^2, 
\end{equation*}
where $\rho_f\geq 0$ is the strong convexity parameter of $f$.
\end{itemize}
\end{lemma}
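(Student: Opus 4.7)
The plan is to work through the four assertions in order, since each one builds on the previous, and the whole lemma ultimately rests on two facts: the defining property of a psd-convex overestimate (namely $F_i(z)\preceq G_i(z;y)$ with equality at $z=\bar{x}$ when $y=\psi_i(\bar{x})$) and the optimality / strong convexity of the subproblem \ref{eq:convx_subprob}.

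For part (a), I would take any $x\in \mathcal{F}(\bar{x}^k)$, so that $x\in \Omega$ and $G_i(x;\psi_i(\bar{x}^k))\preceq 0$ for every $i$. Applying the overestimate inequality from Definition~\ref{def:over_relaxation} with $z=x$ and $y=\psi_i(\bar{x}^k)$ yields $F_i(x) \preceq G_i(x;\psi_i(\bar{x}^k))\preceq 0$, hence $x\in \mathcal{F}$. Part (b) is then a one-line induction: $\bar{x}^0\in \mathcal{F}^0\subseteq \mathcal{F}$ by initialization, and if $\bar{x}^k\in \mathcal{F}$ then $\bar{x}^{k+1}\in \mathcal{S}(\bar{x}^k,Q_k)\subseteq \mathcal{F}(\bar{x}^k)\subseteq \mathcal{F}$ by (a).

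For part (c), optimality for \ref{eq:convx_subprob} gives $\bar{x}^{k+1}\in \mathcal{F}(\bar{x}^k)$ directly. To see that $\bar{x}^{k+1}\in \mathcal{F}(\bar{x}^{k+1})$, I would use the parameterization identity $G_i(x;\psi_i(x))=F_i(x)$ from Definition~\ref{def:over_relaxation}: evaluating at $x=\bar{x}^{k+1}$ and invoking (b) to get $F_i(\bar{x}^{k+1})\preceq 0$, we find $G_i(\bar{x}^{k+1};\psi_i(\bar{x}^{k+1}))=F_i(\bar{x}^{k+1})\preceq 0$, so $\bar{x}^{k+1}\in \Omega$ also satisfies the approximate constraints at the new parameter.

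Part (d) is the one requiring slightly more care and is the main (though still mild) obstacle. The key observation is that part (c), applied at index $k$ instead of $k+1$, shows $\bar{x}^k\in \mathcal{F}(\bar{x}^k)$; so $\bar{x}^k$ is a feasible competitor in \ref{eq:convx_subprob}. Let $h_k(x):=f(x)+\tfrac{1}{2}\|x-\bar{x}^k\|_{Q_k}^2$. Since $f$ is convex with strong-convexity modulus $\rho_f\geq 0$ and $Q_k\in\mathcal{S}^n_+$, the function $h_k$ is strongly convex with modulus at least $\rho_f$. Using the standard strong-convexity inequality for a constrained minimizer, namely $h_k(y)\geq h_k(\bar{x}^{k+1})+\tfrac{\rho_f}{2}\|y-\bar{x}^{k+1}\|^2$ for every $y\in \mathcal{F}(\bar{x}^k)$, and substituting $y=\bar{x}^k$, the quadratic regularization term on the left vanishes and we obtain
\begin{equation*}
f(\bar{x}^k)\;\geq\; f(\bar{x}^{k+1})+\tfrac{1}{2}\|\bar{x}^{k+1}-\bar{x}^k\|_{Q_k}^2+\tfrac{\rho_f}{2}\|\bar{x}^{k+1}-\bar{x}^k\|^2,
\end{equation*}
which rearranges into the claimed descent inequality. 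The only subtle point is citing the right form of the strong-convexity bound for a constrained minimizer, which follows by combining first-order optimality with the strong-convexity definition, so I would state it and move on.
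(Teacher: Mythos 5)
Your proposal is correct and follows essentially the same route as the paper: parts (a)--(c) use the two defining properties of the psd-convex overestimate exactly as the paper does, and part (d) rests on the same inequality $f(x)+\tfrac{1}{2}\norm{x-\bar{x}^k}^2_{Q_k} \geq f(\bar{x}^{k+1})+\tfrac{1}{2}\norm{\bar{x}^{k+1}-\bar{x}^k}^2_{Q_k}+\tfrac{\rho_f}{2}\norm{x-\bar{x}^{k+1}}^2$ for $x\in\mathcal{F}(\bar{x}^k)$, evaluated at $x=\bar{x}^k$ (which is feasible by the identity $G_i(\bar{x}^k;\psi_i(\bar{x}^k))=F_i(\bar{x}^k)$), the only difference being that you derive this inequality from first-order optimality plus strong convexity of $h_k$ while the paper asserts it directly.
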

\begin{proof}
For a given $\bar{x}^k$, we have $\bar{y}^k_i = \psi_i(\bar{x}^k)$ and $F_i(x) \preceq G_i(x; \bar{y}^k_i) \preceq 0$ for $i=1,\dots, m$. Thus if
$x\in\mathcal{F}(\bar{x}^k)$ then $x\in\mathcal{F}$, the statement a) holds. 
Consequently, the sequence $\{\bar{x}^k\}$ is feasible to \eqref{eq:NSDP} which is indeed the statement b).
Since $\bar{x}^{k+1}$ is a solution of \ref{eq:convx_subprob}, it shows that $\bar{x}^{k+1}\in\mathcal{F}(\bar{x}^k)$. 
Now, we have to show it belongs to $\mathcal{F}(\bar{x}^{k+1})$. 
Indeed, since $G_i(\bar{x}^{k+1}, \bar{y}_i^{k+1}) = F_i(\bar{x}^{k+1}) \preceq 0$ by Definition \ref{def:over_relaxation} for all $i=1,\dots, m$, we conclude
$\bar{x}^{k+1}\in\mathcal{F}(\bar{x}^{k+1})$. The statement c) is proved.
Finally, we prove d). 
Since $\bar{x}^{k+1}$ is the optimal solution of \ref{eq:convx_subprob}, we have $f(\bar{x}^{k+1}) +
\frac{1}{2}\norm{\bar{x}^{k+1}-\bar{x}^k}^2_{Q_k} \leq f(x) + \frac{1}{2}(x-x^k)^TQ_k(x-x^k) - \frac{\rho_f}{2}\norm{x-\bar{x}^{k+1}}^2$ for all
$x\in\mathcal{F}(\bar{x}^k)$.
However, we have $\bar{x}^k\in\mathcal{F}(\bar{x}^k)$ due to c). By substituting $x = \bar{x}^k$ in the previous inequality we obtain the estimate d).
\end{proof}

Now, we denote by $\mathcal{L}_f(\alpha) := \left\{x\in\mathcal{F}~|~ f(x) \leq \alpha\right\}$ the lower level set of the objective function. 
Let us assume that $G_i(\cdot;y)$ is continuously differentiable in $\mathcal{L}_f(f(\bar{x}^0))$ for any $y$. 
We say that the \textit{Robinson qualification} condition for \ref{eq:convx_subprob} holds at $\bar{x}$ if $0\in\mathrm{int}(G_i(\bar{x};\bar{y}_i^k)+
D_xG_i(\bar{x};\bar{y}_i^k)(\Omega-\bar{x}) + \mathcal{S}^p_{+})$ for $i=1,\dots, m$. In order to prove the convergence of Algorithm \ref{alg:A1}, we require
the following assumption.

\begin{assumption}\label{as:A2}
The set of KKT points of \eqref{eq:NSDP} is nonempty. For a given $y$, the matrix-valued mappings $G_i(\cdot;y)$ are continuously differentiable on
$\mathcal{L}_f(f(\bar{x}^0))$. The convex problem \ref{eq:convx_subprob} is solvable and the Robinson qualification condition holds at its solutions.
\end{assumption}
We note that if Algorithm 1 is terminated at the iteration $k$ such that $\bar{x}^k = \bar{x}^{k+1}$ then $\bar{x}^k$ is a stationary point of \eqref{eq:NSDP}.

\begin{theorem}\label{th:convergence}
Suppose that Assumptions A.\ref{as:A1} and A.\ref{as:A2} are satisfied. Suppose further that the lower level set $\mathcal{L}_f(f(\bar{x}^0))$ is bounded. 
Let $\{(\bar{x}^k, \bar{W}^k)\}_{k\geq 1}$ be an infinite sequence generated by Algorithm \ref{alg:A1} starting from $\bar{x}^0\in\mathcal{F}^0$.
Assume that $\lambda_{\max}(Q_k) \leq M <+\infty$.
Then if either $f$ is strongly convex or $\lambda_{\min}(Q_k) \geq \rho > 0$ for $k\geq 0$ then every accumulation point $(\bar{x}^{*}, \bar{W}^{*})$ of
$\{(\bar{x}^k, \bar{W}^k)\}$ is a KKT point of \eqref{eq:NSDP}.
Moreover, if the set of the KKT points of \eqref{eq:NSDP} is finite then the whole sequence $\{ (\bar{x}^k, \bar{W}^k) \}$ converges to a KKT point of
\eqref{eq:NSDP}.
\end{theorem}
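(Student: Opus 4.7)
The plan is to combine the descent estimate of Lemma~\ref{le:feasible_set}(d) with standard closedness arguments for the KKT system of the subproblem \ref{eq:convx_subprob}, and then invoke a connectedness argument for the isolated-limit claim.

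First I would establish that the step sizes vanish. By Lemma~\ref{le:feasible_set}(b), $\{\bar{x}^k\}\subset\mathcal{L}_f(f(\bar{x}^0))$, which is bounded by hypothesis, so the iterates are bounded and, by continuity of $f$, the nonincreasing sequence $\{f(\bar{x}^k)\}$ converges to some $f^{\star}$. Under either hypothesis, summing Lemma~\ref{le:feasible_set}(d) over $k$ and using the positive coefficient $\rho_f$ (strong convexity case) or the positive lower bound $\rho$ on $\lambda_{\min}(Q_k)$ produces a finite upper bound on $\sum_{k\geq 0}\|\bar{x}^{k+1}-\bar{x}^k\|^2$, so in particular $\|\bar{x}^{k+1}-\bar{x}^k\|\to 0$.

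Next I would pass to the limit in the KKT conditions of \ref{eq:convx_subprob}. Extract a subsequence along which $(\bar{x}^{k_j},\bar{W}^{k_j})\to(\bar{x}^{\star},\bar{W}^{\star})$; boundedness of $\{\bar{W}^{k_j}\}$ follows from Robinson's CQ (Assumption~\ref{as:A2}) together with boundedness of the primal iterates by the standard normalization--contradiction argument. Since $\|\bar{x}^{k_j+1}-\bar{x}^{k_j}\|\to 0$, also $\bar{x}^{k_j+1}\to\bar{x}^{\star}$. The stationarity condition for \ref{eq:convx_subprob} at $\bar{x}^{k_j+1}$ reads
\begin{equation*}
0\in\partial f(\bar{x}^{k_j+1})+Q_{k_j}(\bar{x}^{k_j+1}-\bar{x}^{k_j})+\sum_{i=1}^m D_xG_i(\bar{x}^{k_j+1};\bar{y}_i^{k_j})^{*}\bar{W}_i^{k_j+1}+\mathcal{N}_{\Omega}(\bar{x}^{k_j+1}),
\end{equation*}
together with feasibility, dual feasibility and complementarity. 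The crucial identity is $D_xG_i(x;\psi_i(x))=DF_i(x)$: since the mapping $z\mapsto G_i(z;\psi_i(x))-F_i(z)$ is positive semidefinite and vanishes at $z=x$, applying first-order optimality to the scalar function $t\mapsto v^T G_i(x+th;\psi_i(x))v - v^T F_i(x+th)v$ at $t=0$ for arbitrary $v,h$ yields the equality. Combined with $\lambda_{\max}(Q_k)\leq M$ and $\bar{x}^{k_j+1}-\bar{x}^{k_j}\to 0$ (so the regularization term vanishes in the limit), continuity of $\psi_i$ and of $DG_i$ in both arguments, graph-closedness of $\partial f$ and $\mathcal{N}_{\Omega}$, and continuity of $F_i$ and of the trace inner product appearing in complementarity, letting $j\to\infty$ produces exactly the system \eqref{eq:KKT_sys} at $(\bar{x}^{\star},\bar{W}^{\star})$.

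For the last statement, when the KKT set of \eqref{eq:NSDP} is finite I would invoke the classical Ostrowski-type result that any bounded sequence with $\|\bar{x}^{k+1}-\bar{x}^k\|\to 0$ has a connected set of accumulation points; since all such accumulation points are KKT (by the previous paragraph) and the KKT set is finite and hence totally disconnected, the accumulation set must be a singleton, which gives convergence of the whole sequence $\{\bar{x}^k\}$ and, by continuous dependence of the multipliers through the limiting KKT system, of $\{\bar{W}^k\}$ too. The step I expect to be the main obstacle is rigorously justifying the uniform boundedness of $\{\bar{W}^k\}$: Robinson's CQ is posed at solutions of the perturbed subproblems, so transferring it to a uniform bound along the tail requires a perturbation argument exploiting that $\bar{y}_i^{k_j}\to\psi_i(\bar{x}^{\star})$ and the stability of Robinson's condition under smooth perturbations of the constraint data.
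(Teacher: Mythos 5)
Your proposal is correct in substance, but it takes a genuinely different route from the paper. The paper's own proof is essentially two lines: it verifies that the solution mapping $\mathcal{S}(\bar{x}^k,Q_k)$ of \ref{eq:convx_subprob} is closed (it is a singleton, because the regularized subproblem is strongly convex), and then appeals to Zangwill's convergence theorem for closed algorithmic maps \cite[p. 91]{Zangwill1969} together with the proof of Theorem 3.2 in \cite{TranDinh2011}, explicitly omitting the details. You instead give a direct, self-contained limiting argument: summing the descent inequality of Lemma \ref{le:feasible_set}(d) gives $\sum_{k}\norm{\bar{x}^{k+1}-\bar{x}^k}^2<\infty$, hence vanishing steps; you then pass to the limit in the KKT system of the subproblem. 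The identity $D_xG_i(x;\psi_i(x))=DF_i(x)$, which you obtain from first-order optimality of $z\mapsto v^T\left(G_i(z;\psi_i(x))-F_i(z)\right)v$ at its minimizer $z=x$, is exactly the first-order tightness of the inner approximation that makes the limiting system coincide with \eqref{eq:KKT_sys}; the paper never states this identity explicitly, and your proof has the merit of surfacing it and of showing precisely where each hypothesis enters (bounded level set for boundedness of iterates, $\lambda_{\max}(Q_k)\leq M$ so the regularization term vanishes, Robinson's condition for multiplier boundedness). Your Ostrowski-type connectedness argument for the finite-KKT-set case is likewise the standard way of making rigorous what the paper leaves to the citations.

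Two repairs are needed, neither fatal. First, an indexing slip: you extract a subsequence with $(\bar{x}^{k_j},\bar{W}^{k_j})\to(\bar{x}^{*},\bar{W}^{*})$ but then write the stationarity condition of the subproblem solved at step $k_j$, whose primal--dual solution is $(\bar{x}^{k_j+1},\bar{W}^{k_j+1})$; that multiplier subsequence is not the one converging to $\bar{W}^{*}$, so as written you only show that $(\bar{x}^{*},\bar{W}^{**})$ is a KKT point for some further accumulation point $\bar{W}^{**}$ of $\{\bar{W}^{k_j+1}\}$. To conclude for $(\bar{x}^{*},\bar{W}^{*})$ itself, write instead the KKT system of the subproblem formed at $\bar{x}^{k_j-1}$, whose solution pair is exactly $(\bar{x}^{k_j},\bar{W}^{k_j})$; vanishing steps give $\bar{x}^{k_j-1}\to\bar{x}^{*}$ and the same limit argument applies verbatim. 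Second, in the final claim, convergence of the whole dual sequence $\{\bar{W}^k\}$ does not follow from ``continuous dependence'' alone, since multipliers at $\bar{x}^{*}$ need not be unique and you have not shown $\norm{\bar{W}^{k+1}-\bar{W}^k}\to 0$, which your connectedness argument would require on the dual side; this point is genuinely delicate, and is glossed over by the paper as well.
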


\begin{proof}
First, we show that the solution mapping $\mathcal{S}(\bar{x}^k, Q_k)$ is \textit{closed}. 
Indeed, by Assumption A.\ref{as:A2}, \ref{eq:convx_subprob} is feasible. Moreover, it is strongly convex. Hence, $\mathcal{S}(\bar{x}^k, Q_k) =
\left\{\bar{x}^{k+1}\right\}$, which is obviously closed.
The remaining conclusions of the theorem can be proved similarly as \cite[Theorem 3.2.]{TranDinh2011} by using Zangwill's convergence theorem
\cite[p. 91]{Zangwill1969} of which we omit the details here.
\end{proof}

\begin{remark}\label{rm:conclusions}
Note that the assumptions used in the proof of the closedness of the solution mapping $\mathcal{S}(\cdot)$ in Theorem \ref{th:convergence} are weaker than the
ones used in \cite[Theorem 3.2.]{TranDinh2011}. 
\end{remark}

\section{Applications to robust controller design}\label{sec:app}
In this section, we present some applications of Algorithm \ref{alg:A1} for solving several classes of optimization problems arising in static output feedback
controller design. Typically, these problems are related to the following linear, time-invariant (LTI) system of the form:
\begin{equation}\label{eq:LTI}
\left\{\begin{array}{cl}
&\dot{x} = Ax + B_1w + Bu,\\
&z = C_1x + D_{11}w + D_{12}u,\\
&y = Cx + D_{21}w,
\end{array}\right.
\end{equation}
where $x\in\mathbb{R}^{n}$ is the state vector, $w\in\mathbb{R}^{n_w}$ is the performance input, $u\in\mathbb{R}^{n_u}$ is the input vector,
$z\in\mathbb{R}^{n_z}$ is the performance output, $y\in\mathbb{R}^{n_y}$ is the physical output vector, $A\in\mathbb{R}^{n\times n}$ is state matrix,
$B\in\mathbb{R}^{n\times n_u}$ is input matrix and $C\in\mathbb{R}^{n_y\times n}$ is the output matrix.
By using a static feedback controller of the form $u = Fy$ with $F\in\mathbb{R}^{n_u\times n_y}$, we can write the closed-loop system as follows:
\begin{equation}\label{eq:ss_LTI}
\left\{\begin{array}{cl}
\dot{x}_F = A_Fx_F + B_Fw,\\
z = C_Fx_F + D_Fw.
\end{array}\right.
\end{equation}
The stabilization, $\h_2$, $\h_\infty$ optimization and other control problems of the LTI system can be formulated as an optimization problem with BMI
constraints.
We only use the psd-convex overestimate of a bilinear form in \textit{Example 3} to show that Algorithm \ref{alg:A1} can be applied to solving many
problems ins static state/output feedback controller design such as:
\begin{itemize}
\item[1.] Sparse linear static output feedback controller design;
\item[2.] Spectral abscissa and pseudospectral abscissa optimization;
\item[3.] $\mathcal{H}_2$ optimization;
\item[4.] $\mathcal{H}_{\infty}$ optimization;
\item[5.] and mixed $\mathcal{H}_2/\mathcal{H}_{\infty}$ synthesis.
\end{itemize}
These problems possess at least one BMI constraint of the from $\tilde{B}_I(X, Y, Z)\preceq 0$, where $\tilde{B}_I(X, Y, Z) := X^TY + Y^TX + \mathcal{A}(Z)$,
where $X, Y$ and $Z$ are matrix variables and $\mathcal{A}$ is a affine operator of matrix variable $Z$. 
By means of \textit{Example 3}, we can approximate the bilinear term $X^TY + Y^TX$ by its psd-convex overestimate. Then using Schur's
complement to transform the constraint $G_i(x;x^k) \preceq 0$ of the subproblem \ref{eq:convx_subprob} into an LMI constraint \cite{TranDinh2011}. 
Note that Algorithm \ref{alg:A1} requires an interior starting point $x^0\in\mathcal{F}^0$. In this work, we apply the procedures proposed in
\cite{TranDinh2011} to find such a point.
Now, we summary the whole procedure applying to solve the optimization problems with BMI constraints as follows:

\begin{scheme}\label{scheme:A1}{~}\\
\textit{Step 1. } Find a psd-convex overestimate $G_i(x;y)$ of $F_i(x)$ w.r.t. the parameterization $y = \psi_i(x)$ for $i=1,\dots,m$ (see \textit{Example
1}).\\
\textit{Step 2. } Find a starting point $\bar{x}^0\in\mathcal{F}^0$ (see \cite{TranDinh2011}).\\
\textit{Step 3. } For  a given $\bar{x}^k$, form the convex semidefinite programming problem \ref{eq:convx_subprob} and reformulate it as an optimization with
LMI constraints.\\
\textit{Step 4. } Apply Algorithm \ref{alg:A1} with an SDP solver to solve the given problem.
\end{scheme}
Now, we test Algorithm \ref{alg:A1} for three problems via numerical examples by using the data from  the COMP$\textrm{l}_\textrm{e}$ib library
\cite{Leibfritz2003}. 
All the implementations are done in Matlab 7.8.0 (R2009a) running on a Laptop Intel(R) Core(TM)i7 Q740 \@ 1.73GHz and 4Gb RAM.
We use the YALMIP package \cite{Lofberg2004} as a modeling language and SeDuMi 1.1 as a SDP solver \cite{Sturm1999} to solve the LMI optimization problems
arising in Algorithm \ref{alg:A1} at the initial phase (Phase 1) and the subproblem \ref{eq:convx_subprob}. 
The code is available at \url{http://www.kuleuven.be/optec/software/BMIsolver}. 
We also compare the performance of Algorithm \ref{alg:A1} and the convex-concave decomposition method (CCDM) proposed in \cite{TranDinh2011} in the first
example, i.e. the spectral abscissa optimization problem. 
In the second example, we compare the $\h_{\infty}$-norm computed by Algorithm \ref{alg:A1} and the one provided by HIFOO \cite{Gumussoy09} and PENBMI
\cite{Henrion2005}. 
The last example is the mixed $H_2/H_{\infty}$ synthesis optimization problem which we compare between two values of the $H_2$-norm level.

\subsection{Spectral abscissa optimization}\label{subsec:SOF_design}
We consider an optimization problem with BMI constraint by optimizing the spectral abscissa of the closed-loop system $\dot{x} = (A+BFC)x$ as
\cite{Burke2002,Leibfritz2004}:
\begin{equation}\label{eq:BMI_p2}
\left\{\begin{array}{cl}
\displaystyle
\max_{P,F,\beta} &{\!\!\!} \beta\\
\textrm{s.t.} &{\!\!\!}(A \!+\! BFC)^TP \!+\! P(A \!+\! BFC) \!+\! 2\beta P \prec 0, \\
& P = P^T, ~P \succ 0.
\end{array}\right.
\end{equation}
Here, matrices $A\in\mathbb{R}^{n\times n}$, $B\in\mathbb{R}^{n\times n_u}$ and $C\in\mathbb{R}^{n_y\times n}$ are given. Matrices $P\in\mathbb{R}^{n\times n}$
and $F\in\mathbb{R}^{n_u\times n_y}$ and the scalar $\beta$ are considered as variables.
If the optimal value of \eqref{eq:BMI_p2} is strictly positive then the closed-loop feedback controller $u = Fy$ stabilizes the linear system $\dot{x} =
(A+BFC)x$.

By introducing an intermediate variable $A_F := A + BFC + \beta I$, the BMI constraint in the second line of \eqref{eq:BMI_p2} can be written $A_F^TP +
P^TA_F \prec 0$.
Now, by applying Scheme \ref{scheme:A1} one can solve the problem \eqref{eq:BMI_p2} by exploiting the Sedumi SDP solver \cite{Sturm1999}. 
In order to obtain a strictly descent direction, we regularize the subproblem \ref{eq:convx_subprob} by adding quadratic terms: $\rho_F\norm{F-F^k}_F^2 +
\rho_P\norm{P-P^k}^2_F +
\rho_f|\beta-\beta_k|^2$, where $\rho_F = \rho_P = \rho_f = 10^{-3}$.
Algorithm \ref{alg:A1} is terminated if one of the following conditions is satisfied:
\begin{itemize}
\item the subproblem \ref{eq:convx_subprob} encounters a numerical problem;
\item $\norm{\bar{x}^{k+1}-\bar{x}^k}_{\infty}/(\norm{\bar{x}^k}_{\infty}+1)\leq 10^{-3}$;
\item the maximum number of iterations, $K_{\max}$, is reached;
\item or the objective function of \eqref{eq:NSDP} is not significantly improved after two successive iterations, i.e. $|f^{k+1}-f^k| \leq 10^{-4}(1+|f^k|)$ for
some $k=\bar{k}$
and $k=\bar{k}+1$, where $f^k := f(\bar{x}^k)$.
\end{itemize}
We test Algorithm \ref{alg:A1} for several problems in COMP$\textrm{l}_\textrm{e}$ib and compare our results with the ones reported by the
\textit{convex-concave decomposition method} (CCDM) in \cite{TranDinh2011}.

\begin{center}
\begin{table}[!ht]
\vskip-0.45cm
\renewcommand{\arraystretch}{0.7}
\begin{scriptsize}
\caption{Computational results for \eqref{eq:BMI_p2} in COMP$\textrm{l}_{\textrm{e}}$ib}\label{tb:exam2}
\newcommand{\cell}[1]{{\!\!}#1{\!\!}}
\newcommand{\cellbf}[1]{{\!\!}\textbf{#1}{\!\!}}
\begin{tabular}{|l|r|r|r|r|r|r|r|r|r|}\hline
\multicolumn{2}{|c|}{Problem} & \multicolumn{3}{c|}{{\!\!\!}Convex-Concave Decom.{\!\!\!\!}} & \multicolumn{3}{c|}{{\!\!\!\!}Inner Convex
App.{\!\!\!\!}} \\ \hline
\!\!\texttt{Name}\!\!\! & \!\!\!$\alpha_0(A)$\!\!\!&\!\!\texttt{CCDM}~\!\!&\!\!\!\!\texttt{iter}\!\!&\!\!\texttt{time[s]}\!\!& \!\!$\alpha_0(A_F)$\!\! &
\!\!\!\!\texttt{Iter}\!\! &
\!\!\!\!\!\!\texttt{time[s]}\!\!\! \\ \hline
\cell{AC1}  & \cell{0.000} & \cellbf{-0.8644} & \cell{  62} & \cell{23.580} & \cellbf{-0.7814} & \cell{  55} & \cell{19.510} \\ \hline
\cell{AC4}  & \cell{2.579} & \cellbf{-0.0500} & \cell{  14} & \cell{ 6.060} & \cellbf{-0.0500} & \cell{  14} & \cell{ 4.380} \\ \hline
\cell{\textbf{AC5}$^{\textbf{a}}$}
            & \cell{0.999} & \cellbf{-0.7389} & \cell{  28} & \cell{10.200} & \cellbf{-0.7389} & \cell{  37} & \cell{12.030} \\ \hline
\cell{AC7}  & \cell{0.172} & \cellbf{-0.0766} & \cell{ 200} & \cell{95.830} & \cellbf{-0.0502} & \cell{  90} & \cell{80.710} \\ \hline
\cell{AC8}  & \cell{0.012} & \cellbf{-0.0755} & \cell{  24} & \cell{12.110} & \cellbf{-0.0640} & \cell{  40} & \cell{32.340} \\ \hline
\cell{AC9}  & \cell{0.012} & \cellbf{-0.4053} & \cell{ 100} & \cell{55.460} & \cellbf{-0.3926} & \cell{ 200} & \cell{217.230} \\ \hline
\cell{AC11} & \cell{5.451} & \cellbf{-5.5960} & \cell{ 200} & \cell{81.230} & \cellbf{-3.1573} & \cell{ 181} & \cell{73.660} \\ \hline
\cell{AC12} & \cell{0.580} & \cellbf{-0.5890} & \cell{ 200} & \cell{61.920} & \cellbf{-0.2948} & \cell{ 200} & \cell{71.200} \\ \hline
\cell{HE1}  & \cell{0.276} & \cellbf{-0.2241} & \cell{ 200} & \cell{56.890} & \cellbf{-0.2134} & \cell{ 200} & \cell{58.580} \\ \hline
\cell{HE3}  & \cell{0.087} & \cellbf{-0.9936} & \cell{ 200} & \cell{98.730} & \cellbf{-0.8380} & \cell{  57} & \cell{54.720} \\ \hline
\cell{HE4}  & \cell{0.234} & \cellbf{-0.8647} & \cell{  63} & \cell{27.620} & \cellbf{-0.8375} & \cell{  88} & \cell{70.770} \\ \hline
\cell{HE5}  & \cell{0.234} & \cellbf{-0.1115} & \cell{ 200} & \cell{86.550} & \cellbf{-0.0609} & \cell{ 200} & \cell{181.470} \\ \hline
\cell{HE6}  & \cell{0.234} & \cellbf{-0.0050} & \cell{  12} & \cell{29.580} & \cellbf{-0.0050} & \cell{  18} & \cell{106.840} \\ \hline
\cell{REA1} & \cell{1.991} & \cellbf{-4.2792} & \cell{ 200} & \cell{70.370} & \cellbf{-2.8932} & \cell{ 200} & \cell{74.560} \\ \hline
\cell{REA2} & \cell{2.011} & \cellbf{-2.1778} & \cell{  40} & \cell{13.360} & \cellbf{-1.9514} & \cell{  43} & \cell{13.120} \\ \hline
\cell{REA3} & \cell{0.000} & \cellbf{-0.0207} & \cell{ 200} & \cell{267.160}& \cellbf{-0.0207} & \cell{ 161} & \cell{311.490} \\ \hline
\cell{DIS2} & \cell{1.675} & \cellbf{-8.4540} & \cell{  28} & \cell{ 9.430} & \cellbf{-8.3419} & \cell{  44} & \cell{12.600} \\ \hline
\cell{DIS4} & \cell{1.442} & \cellbf{-8.2729} & \cell{  95} & \cell{40.200} & \cellbf{-5.4467} & \cell{  89} & \cell{40.120} \\ \hline
\cell{WEC1} & \cell{0.008} & \cellbf{-0.8972} & \cell{ 200} & \cell{121.300}& \cellbf{-0.8568} & \cell{  68} & \cell{76.000} \\ \hline
\cell{IH}   & \cell{0.000} & \cellbf{-0.5000} & \cell{   7} & \cell{23.670} & \cellbf{-0.5000} & \cell{  11} & \cell{82.730} \\ \hline
\cell{CSE1} & \cell{0.000} & \cellbf{-0.3093} & \cell{  81} & \cell{219.910}& \cellbf{-0.2949} & \cell{ 200} & \cell{1815.400} \\ \hline
\cell{TF1}  & \cell{0.000} & \cellbf{-0.1598} & \cell{  87} & \cell{34.960} & \cellbf{-0.0704} & \cell{ 200} & \cell{154.430} \\ \hline
\cell{TF2}  & \cell{0.000} & \cellbf{-0.0000} & \cell{   8} & \cell{ 4.220} & \cellbf{-0.0000} & \cell{  12} & \cell{10.130} \\ \hline
\cell{TF3}  & \cell{0.000} & \cellbf{-0.0031} & \cell{  93} & \cell{35.000} & \cellbf{-0.0032} & \cell{  95} & \cell{70.980} \\ \hline
\cell{NN1}  & \cell{3.606} & \cellbf{-1.5574} & \cell{ 200} & \cell{57.370} & \cellbf{0.1769}  & \cell{ 200} & \cell{59.230} \\ \hline
\cell{\textbf{NN5}$^{\textbf{a}}$}
            & \cell{0.420} & \cellbf{-0.0722} & \cell{ 200} & \cell{79.210} & \cellbf{-0.0490} & \cell{ 200} & \cell{154.160} \\ \hline
\cell{NN9}  & \cell{3.281} & \cellbf{-0.0279} & \cell{  33} & \cell{11.880} & \cellbf{0.0991} & \cell{  44} & \cell{13.860} \\ \hline
\cell{NN13} & \cell{1.945} & \cellbf{-3.4412} & \cell{ 181} & \cell{64.500} & \cellbf{-0.2783} & \cell{  32} & \cell{12.430} \\ \hline
\cell{NN15} & \cell{0.000} & \cellbf{-1.0424} & \cell{ 200} & \cell{58.440} & \cellbf{-1.0409} & \cell{ 200} & \cell{60.930} \\ \hline
\cell{NN17} & \cell{1.170} & \cellbf{-0.6008} & \cell{  99} & \cell{27.190} & \cellbf{-0.5991} & \cell{ 132} & \cell{34.820} \\ \hline
\end{tabular}
\end{scriptsize}
\vskip -0.45cm
\end{table}
\end{center}
The numerical results and the performances of two algorithms are reported in Table \ref{tb:exam2}. Here, we initialize both algorithms  with the same initial
guess $F^0 = 0$.

The notation in Table \ref{tb:exam2} consists of: \texttt{Name} is the name of problems, $\alpha_0(A)$, $\alpha_0(A_F)$ are the maximum real part of the
eigenvalues of the open-loop and closed-loop matrices $A$, $A_F$, respectively; \texttt{iter} is the number of iterations, \texttt{time[s]} is the CPU time in
seconds. 
Both methods, Algorithm \ref{alg:A1} and CCDM fail or make only slow progress towards a local solution with $6$ problems: AC18, DIS5, PAS, NN6, NN7, NN12 in
COMP$\textrm{l}_\textrm{e}$ib. 
Problems AC5 and NN5 are initialized with a different matrix $F^0$ to avoid numerical problems.
The numerical results show that the performances of both methods are quite similar for the majority of problems.

Note that Algorithm \ref{alg:A1} as well as the algorithm in \cite{TranDinh2011} are local optimization methods which only find a local minimizer and these
solutions may not be the same. 

\subsection{$\mathcal{H}_{\infty}$ optimization: BMI formulation}\label{subsec:Hinf_prob}
Next, we apply Algorithm \ref{alg:A1} to solve the optimization with BMI constraints arising in $\mathcal{H}_{\infty}$ optimization of the linear system
\eqref{eq:LTI}. 
In this example we assume that $D_{21} = 0$, this problem is reformulated as the following optimization problem with BMI constraints \cite{Leibfritz2003}:
\begin{equation}\label{eq:BMI_Hinf_prob}
\begin{array}{cl}
\displaystyle\min_{F, X, \gamma} &\gamma \\
\textrm{s.t.} & \begin{bmatrix}A_F^TX + XA_F &  XB_1 & C_F^T\\ B_1^TX & -\gamma I_w & D_{11}^T\\ C_F & D_{11} & -\gamma I_z\end{bmatrix} \prec
0,\\
& X \succ 0, ~ \gamma > 0.
\end{array}
\end{equation}
Here, as before, we define $A_F := A + BFC$ and $C_F := C_1 + D_{12}FC$.
The bilinear matrix term $A_F^TX + XA_F$ at the top-corner of the first constraint can be approximated by the form of $\mathcal{Q}_Q$ defined in
\eqref{eq:G_Q}. 
Therefore, we can use this psd-convex overestimate to approximate the problem \eqref{eq:BMI_Hinf_prob} by a sequence of the convex subproblems of
the form \ref{eq:convx_subprob}. Then we transform the subproblem into a standard SDP problem that can be solve by a standard SDP solver thanks to
Schur's complement \cite{Bernstein2005,TranDinh2011}.

To determine a starting point, we perform the heuristic procedure called \textit{Phase 1} proposed in \cite{TranDinh2011} which is terminated after a finite
number of iterations.
In this example, we also test Algorithm \ref{alg:A1} for several problems in COMP$\textrm{l}_{\textrm{e}}$ib using the same parameters and the stopping
criterion as in the previous subsection.  The computational results are shown in Table \ref{tbl:H_inf_problems}.
The numerical results computed by HIFOO and PENBMI are also included in Table \ref{tbl:H_inf_problems}. 

Here, three last columns are the results and the performances of our method, the columns HIFOO and PENBMI indicate the $\mathcal{H}_{\infty}$-norm of the
closed-loop system for the static output feedback controller given by HIFOO and PENBMI, respectively. 
We can see from Table \ref{tbl:H_inf_problems} that the optimal values reported by Algorithm \ref{alg:A1} and HIFOO are almost similar for many
problems whereas in general PENBMI has difficulties in finding a feasible solution.

\begin{center}
\begin{table}[!ht]
\vskip-0.3cm
\begin{scriptsize}
\renewcommand{\arraystretch}{0.7}
\caption{$\mathcal{H}_{\infty}$ synthesis benchmarks on  COMP$\textrm{l}_{\textrm{e}}$ib plants}
\label{tbl:H_inf_problems}
\newcommand{\cell}[1]{{\!\!\!\!}#1{\!\!\!}}
\newcommand{\cellbf}[1]{{\!\!\!\!}\textbf{#1}{\!\!\!}}
\begin{tabular}{|l|c|c|c|c|r|r|r|r|r|r|}\hline
\multicolumn{6}{|c|}{Problem information} & \multicolumn{2}{c|}{Other Results, $\mathcal{H}_\infty$} & \multicolumn{3}{|c|}{\!\!{Results and Performances 
}\!\!} \\ \hline
\!\!\texttt{Name}\!\!\! & \!\!\!$n_x$\!\!\! & \!\!\!$n_y$\!\!\! & \!\!\!$n_u$\!\!\! & \!\!\!$n_z$\!\!\! & \!\!\!$n_w$\!\!\! & 
\!\!HIFOO\!\!\! & \!\!PENBMI\!\! & \!\!\!$\mathcal{H}_\infty$\!\!\! & \!\!\!\!\!\texttt{iter}\!\!\! & \!\!\!\!\!\!\!\texttt{time\![s]}\!\!\!\!\! \\ \hline
\cell{AC2}       & \cell{5} & \cell{3} & \cell{3} & \cell{5} & \cell{3} & \cellbf{0.1115} & \cellbf{-} & \cellbf{0.1174} & \cell{ 120} & \cell{91.560} \\ \hline
\cell{AC3}       & \cell{5} & \cell{4} & \cell{2} & \cell{5} & \cell{5} & \cellbf{4.7021} & \cellbf{-} & \cellbf{3.5053} & \cell{ 267} & \cell{193.940}\\ \hline
\cell{AC6}       & \cell{7} & \cell{4} & \cell{2} & \cell{7} & \cell{7} & \cellbf{4.1140} & \cellbf{-} & \cellbf{4.1954} & \cell{167}  & \cell{138.570} \\
\hline
\cell{AC7}       & \cell{9} & \cell{2} & \cell{1} & \cell{1} & \cell{4} & \cellbf{0.0651} & \cellbf{0.3810} & \cellbf{0.0339} & \cell{ 300} & \cell{276.310} \\
\hline
\cell{AC8}       & \cell{9} & \cell{5} & \cell{1} & \cell{2} & \cell{10}& \cellbf{2.0050} & \cellbf{-} &  \cellbf{4.5463} & \cell{ 224} & \cell{230.990} \\
\hline
\cell{AC11$^{b}$}& \cell{5} & \cell{4} & \cell{2} & \cell{5} & \cell{5} &  \cellbf{3.5603} & \cellbf{-} & \cellbf{3.4924} & \cell{ 300} & \cell{255.620} \\
\hline
\cell{AC15}      & \cell{4} & \cell{3} & \cell{2} & \cell{6} & \cell{4} & \cellbf{15.2074}& \cellbf{427.4106} & \cellbf{15.2036} & \cell{ 153} & \cell{130.660}
\\ \hline
\cell{AC16}      & \cell{4} & \cell{4} & \cell{2} & \cell{6} & \cell{4} & \cellbf{15.4969}& \cellbf{-} & \cellbf{15.0433} & \cell{ 267} & \cell{201.360} \\
\hline
\cell{AC17}      & \cell{4} & \cell{2} & \cell{1} & \cell{4} & \cell{4} & \cellbf{6.6124} & \cellbf{-} &  \cellbf{6.6571} & \cell{ 192} & \cell{64.880} \\
\hline
\cell{HE1$^{b}$} & \cell{4} & \cell{1} & \cell{2} & \cell{2} & \cell{2} & \cellbf{0.1540} & \cellbf{1.5258} & \cellbf{0.2188} & \cell{ 300} & \cell{97.760} \\
\hline
\cell{HE3}       & \cell{8} & \cell{6} & \cell{4} & \cell{10}& \cell{1} & \cellbf{0.8545} & \cellbf{1.6843} &  \cellbf{0.8640} & \cell{  15} & \cell{16.320} \\
\hline
\cell{HE5$^{b}$} & \cell{8} & \cell{2} & \cell{4} & \cell{4} & \cell{3} & \cellbf{8.8952} & \cellbf{-} &  \cellbf{36.3330} & \cell{ 154} & \cell{208.680} \\
\hline
\cell{REA1}      & \cell{4} & \cell{3} & \cell{2} & \cell{4} & \cell{4} & \cellbf{0.8975} & \cellbf{-} &  \cellbf{0.8815} & \cell{ 183} & \cell{67.790} \\
\hline
\cell{REA2$^{b}$}& \cell{4} & \cell{2} & \cell{2} & \cell{4} & \cell{4} & \cellbf{1.1881} & \cellbf{-} & \cellbf{1.4444} & \cell{ 300} & \cell{109.430} \\
\hline
\cell{REA3}      & \cell{12}& \cell{3} & \cell{1} & \cell{12}& \cell{12}& \cellbf{74.2513}& \cellbf{74.4460} & \cellbf{75.0634} & \cell{   2} & \cell{137.120}
\\ \hline
\cell{DIS1}      & \cell{8} & \cell{4} & \cell{4} & \cell{8} & \cell{1} & \cellbf{4.1716} & \cellbf{-} & \cellbf{4.2041} & \cell{ 129} & \cell{110.330} \\
\hline
\cell{DIS2}      & \cell{3} & \cell{2} & \cell{2} & \cell{3} & \cell{3} & \cellbf{1.0548} & \cellbf{1.7423} & \cellbf{1.1570} & \cell{  78} & \cell{28.330}
\\\hline
\cell{DIS3}      & \cell{5} & \cell{3} & \cell{3} & \cell{2} & \cell{3} & \cellbf{1.0816} & \cellbf{-} &  \cellbf{1.1701} & \cell{ 219} & \cell{160.680} \\
\hline
\cell{DIS4}      & \cell{6} & \cell{6} & \cell{4} & \cell{6} & \cell{6} & \cellbf{0.7465} & \cellbf{-} &  \cellbf{0.7532} & \cell{ 171} & \cell{126.940} \\
\hline
\cell{TG1$^{b}$} & \cell{10}& \cell{2} & \cell{2} & \cell{10}& \cell{10}& \cellbf{12.8462}& \cellbf{-} &  \cellbf{12.9461} & \cell{  64} & \cell{264.050} \\
\hline
\cell{AGS}       & \cell{12}& \cell{2} & \cell{2} & \cell{12}& \cell{12}& \cellbf{8.1732} & \cellbf{188.0315} &  \cellbf{8.1733} & \cell{  41} & \cell{160.880}
\\ \hline
\cell{WEC2}      & \cell{10}& \cell{4} & \cell{3} & \cell{10}& \cell{10}& \cellbf{4.2726} & \cellbf{32.9935} &  \cellbf{8.8809} & \cell{ 300} & \cell{1341.760}
\\ \hline
\cell{WEC3}      & \cell{10}& \cell{4} & \cell{3} & \cell{10}& \cell{10}& \cellbf{4.4497} & \cellbf{200.1467} & \cellbf{7.8215} & \cell{ 225} & \cell{875.100}
\\ \hline
\cell{BDT1}      & \cell{11}& \cell{3} & \cell{3} & \cell{6} & \cell{1} & \cellbf{0.2664} & \cellbf{-} & \cellbf{0.8544} & \cell{   3} & \cell{ 5.290} \\ \hline
\cell{MFP}       & \cell{4} & \cell{2} & \cell{3} & \cell{4} & \cell{4} & \cellbf{31.5899}& \cellbf{-} & \cellbf{31.6388} & \cell{ 300} & \cell{100.660} \\
\hline
\cell{IH}        & \cell{21}& \cell{10}& \cell{11}& \cell{11}& \cell{21}& \cellbf{1.9797} & \cellbf{-} &
\cellbf{1.1861} & \cell{ 210} & \cell{2782.880} \\ \hline
\cell{CSE1}      & \cell{20}& \cell{10}& \cell{2} & \cell{12}& \cell{1} & \cellbf{0.0201} & \cellbf{-} &  \cellbf{0.0219} & \cell{   3} & \cell{39.330} \\
\hline
\cell{PSM}       & \cell{7} & \cell{3} & \cell{2} & \cell{5} & \cell{2} & \cellbf{0.9202} & \cellbf{-} & \cellbf{0.9266} & \cell{ 153} & \cell{104.170} \\
\hline
\cell{EB1}       & \cell{10}& \cell{1} & \cell{1} & \cell{2} & \cell{2} & \cellbf{3.1225} & \cellbf{39.9526} & \cellbf{2.0532} & \cell{ 300} & \cell{299.380} \\
\hline
\cell{EB2}       & \cell{10}& \cell{1} & \cell{1} & \cell{2} & \cell{2} & \cellbf{2.0201} & \cellbf{39.9547} &  \cellbf{0.8150} & \cell{ 120} & \cell{103.400}
\\ \hline
\cell{EB3}       & \cell{10}& \cell{1} & \cell{1} & \cell{2} & \cell{2} & \cellbf{2.0575} & \cellbf{3995311.0743} &  \cellbf{0.8157} & \cell{ 117} &
\cell{116.390} \\ \hline
\cell{NN2}       & \cell{2} & \cell{1} & \cell{1} & \cell{2} & \cell{2} & \cellbf{2.2216} & \cellbf{-} & \cellbf{2.2216} & \cell{  15} & \cell{ 7.070} \\ \hline
\cell{NN4}       & \cell{4} & \cell{3} & \cell{2} & \cell{4} & \cell{4} & \cellbf{1.3627} & \cellbf{-} & \cellbf{1.3884} & \cell{ 204} & \cell{70.200} \\
\hline
\cell{NN8}       & \cell{3} & \cell{2} & \cell{2} & \cell{3} & \cell{3} & \cellbf{2.8871} & \cellbf{78281181.1490} & \cellbf{2.9522} & \cell{ 240} &
\cell{84.510} \\ \hline 
\cell{NN11$^{b}$}& \cell{16}& \cell{5} & \cell{3} & \cell{3} & \cell{3} & \cellbf{0.1037} & \cellbf{-} &  \cellbf{0.1596} & \cell{  15} & \cell{86.770} \\
\hline
\cell{NN15}      & \cell{3} & \cell{2} & \cell{2} & \cell{4} & \cell{1} & \cellbf{0.1039} & \cellbf{-} & \cellbf{0.1201} & \cell{   6} & \cell{ 4.000} \\ \hline
\cell{NN16}      & \cell{8} & \cell{4} & \cell{4} & \cell{4} & \cell{8} & \cellbf{0.9557} & \cellbf{-} & \cellbf{0.9699} & \cell{  36} & \cell{32.200} \\ \hline
\cell{NN17}      & \cell{3} & \cell{1} & \cell{2} & \cell{2} & \cell{1} & \cellbf{11.2182}& \cellbf{-} & \cellbf{11.2538} & \cell{ 270} & \cell{81.480} \\
\hline
\end{tabular}
\end{scriptsize}
\vskip -0.3cm
\end{table}
\end{center}

\subsection{$\mathcal{H}_2/\mathcal{H}_{\infty}$ optimization: BMI formulation}\label{subsec:H2Hinf_prob}
Motivated from the $\mathcal{H}_{\infty}$ optimization problem, in this example we consider the mixed $\mathcal{H}_2/\mathcal{H}_{\infty}$
synthesis optimization problem.
Let us assume that $D_{11} = 0$, $D_{21} = 0$ and the performance output $z$ is divided in two components, $z_1$ and $z_2$.
Then the linear system \eqref{eq:LTI} becomes:
\begin{equation}\label{eq:mixed_LTI_z1z2}
\left\{\begin{array}{cl}
&\dot{x} = Ax + B_1w + Bu,\\
&z_1 = C_1^{z_1}x + D_{12}^{z_1}u,\\
&z_2 = C_1^{z_2}x + D_{12}^{z_2}u,\\
&y   = Cx.
\end{array}\right.
\end{equation}
The mixed $\mathcal{H}_2/\mathcal{H}_{\infty}$ control problem is to find a static output feedback gain $F$ such that, for $u = Fy$, the $\mathcal{H}_2$-norm of
the closed loop from $w$ to $z_2$ is
minimized, while the $\mathcal{H}_{\infty}$-norm from $w$ to $z_1$ is less than some imposed level $\gamma$ \cite{Boyd1994,Leibfritz2004,Ostertag2008}.

This problem leads to the following optimization problem with BMI constraints \cite{Ostertag2008}:
\begin{equation}\label{eq:H2Hinf_prob}
\begin{array}{cl}
\displaystyle\min_{F, P_1,P_2,Z}  &{\!\!\!\!} \textrm{trace}(Z)\\
\textrm{s.t.} 
&{\!\!\!\!} \begin{bmatrix}A_F^T\!P_1 \!+\! P_1A_F \!+\! (C_{F}^{z_1})^T\!C_{F}^{z_1} {\!\!}&{\!\!} P_1B_1\\ B_1^TP_1 {\!\!}&{\!\!} -\gamma^2I\end{bmatrix}\prec
0,\\
&{\!\!\!\!}\begin{bmatrix}A_F^TP_2 + P_2A_F & P_2B_1\\B_1^TP_2 & -I\end{bmatrix}\prec 0,\\
&{\!\!\!\!}\begin{bmatrix}P_2 & (C_{F}^{z_2})^T\\C_{F}^{z_2} & Z \end{bmatrix}\succ 0, ~ P_1 \succ 0, ~P_2\succ 0,
\end{array}
\end{equation}
where $A_F := A + BFC$, $C_{F}^{z_1} := C_1^{z_1} + D_{12}^{z_1}FC$ and $C_{F}^{z_2} :=   C_1^{z_2} + D_{12}^{z_2}FC$.
Note that if $C = I_{n_x}$, the identity matrix, then this problem becomes a mixed $\mathcal{H}_2/\mathcal{H}_{\infty}$ of static state feedback design problem
considered in \cite{Ostertag2008}.

Now, we implement Algorithm \ref{alg:A1} for solving the problem \eqref{eq:H2Hinf_prob}.
As before, we use a procedure proposed in \cite{TranDinh2011} to determine a starting point for Algorithm \ref{alg:A1}.
We test the algorithm described above for several problems in COMP$\textrm{l}_{\textrm{e}}$ib with the level values $\gamma = 4$ and $\gamma = 10$. In this
test, we assume that the output signals $z_1\equiv z_2$. Thus we have $C_1^{z_1} = C_1^{z_2} = C_1$ and $D_{12}^{z_1} = D_{12}^{z_2} = D_{12}$.
The parameters and the stopping criterion of the algorithm are chosen as in the $\mathrm{H}_{\infty}$ problem. The computational results are reported in Table
\ref{tb:H2Hinf_problems} with $\gamma = 4$ and $\gamma = 10$.
\begin{table}[!ht]
\begin{center}
\renewcommand{\arraystretch}{0.7}	
\begin{scriptsize}
\caption{$\mathcal{H}_2/\mathcal{H}_{\infty}$ synthesis benchmarks on COMP$\textrm{l}_{\textrm{e}}$ib plants}\label{tb:H2Hinf_problems}
\newcommand{\cell}[1]{{\!\!\!}#1{\!\!\!\!}}
\newcommand{\cellbf}[1]{{\!\!\!}\textbf{#1}{\!\!\!}}
\begin{tabular}{|l|r|r|r|r|r|r|r|}\hline
\multicolumn{1}{|c|}{\!\!\!\!\! Prob.\!\!\!\!} & \multicolumn{3}{c|}{\!\!\!\!\!{Results ($\gamma=4$)}\!\!\!\!\!} &
\multicolumn{3}{|c|}{\!\!\!\!\!{Results ($\gamma=10$)}\!\!\!\!\!} \\ \hline
{\!\!}\texttt{Name}\!\!\!\!\! &\!\!\!$\mathcal{H}_2/\mathcal{H}_{\infty}$~~~~~~\!\!\!& \!\!\!\!\!\textrm{iter}\!\!\!\! & \!\!\!\!\texttt{time\![s]}\!\!\!\! &
\!$\mathcal{H}_2/\mathcal{H}_{\infty}$~~~~~\!&
\!\!\!\!\!\! \texttt{iter}\!\!\! \!\!\! & \!\!\!\!\texttt{time\![s]}\!\!\!\! \\ \hline
\cell{AC1} & \cellbf{0.0587/0.0993}  & \cell{ 2} & \cell{ 2.410}  & \cellbf{0.0587/0.0994} & \cell{   1} & \cell{ 2.000} \\ \hline
\cell{AC2} & \cellbf{0.1071/0.1730}  & \cell{ 1} & \cell{ 2.920}  & \cellbf{0.1071/0.1730} & \cell{   1} & \cell{ 2.720} \\ \hline
\cell{AC3} & \cellbf{-/-}            & \cell{-}  & \cell{-}       & \cellbf{4.5720/5.1337} & \cell{  57} & \cell{94.620} \\ \hline
\cell{AC6} & \cellbf{-/-}            & \cell{- } & \cell{-}       & \cellbf{3.9951/5.3789} & \cell{  28} & \cell{61.460} \\ \hline
\cell{AC7} & \cellbf{0.0438/0.0610}  & \cell{34} & \cell{50.080}  & \cellbf{0.0441/0.0611} & \cell{   3} & \cell{ 6.110} \\ \hline
\cell{AC11}& \cellbf{4.0914/3.9983}  & \cell{110}& \cell{150.340} & \cellbf{-/-}           & \cell{-}    & \cell{-} \\ \hline
\cell{AC12}& \cellbf{0.0924/0.3486}  & \cell{-}  & \cell{73.46}   & \cellbf{-/-}           & \cell{-}    & \cell{-} \\ \hline
\cell{AC17}& \cellbf{-/ - }          & \cell{-}  & \cell{-}       & \cellbf{4.2061/6.6126} & \cell{ 165} & \cell{100.130} \\ \hline
\cell{HE1} & \cellbf{0.0973/0.2046}  & \cell{1}  & \cell{34.860}  & \cellbf{0.0973/0.2075} & \cell{   1} & \cell{35.260} \\ \hline
\cell{HE2} & \cellbf{- / -}          & \cell{-}  & \cell{-}       & \cellbf{4.7326/9.8059} & \cell{ 135} & \cell{97.560} \\ \hline
\cell{REA1}& \cellbf{1.8217/1.4795}  & \cell{51} & \cell{23.140}  & \cellbf{1.8296/1.4495} & \cell{ 300} & \cell{172.700} \\ \hline
\cell{REA2}& \cellbf{3.5021/3.5122}  & \cell{72} & \cell{36.630}  & \cellbf{3.5024/3.4913} & \cell{ 141} & \cell{107.180} \\ \hline
\cell{DIS1}& \cellbf{- / -}          & \cell{-}  & \cell{-}       & \cellbf{4.2341/4.6736} & \cell{  44} & \cell{275.280} \\ \hline
\cell{DIS2}& \cellbf{1.5080/1.8410}  & \cell{45} & \cell{17.960}  & \cellbf{1.5080/1.8400} & \cell{  45} & \cell{20.280} \\ \hline
\cell{DIS3}& \cellbf{2.0580/1.7969}  & \cell{60} & \cell{68.530}  & \cellbf{2.0579/1.7727} & \cell{  66} & \cell{136.280} \\ \hline
\cell{DIS4}& \cellbf{1.6932/1.1899}  & \cell{72} & \cell{69.000}  & \cellbf{1.6932/1.1899} & \cell{  72} & \cell{68.120} \\ \hline
\cell{AGS} & \cellbf{- ~ / ~ -}        & \cell{-}  & \cell{-}       & \cellbf{7.0356/8.2053} & \cell{   9} & \cell{82.160} \\ \hline
\cell{PSM} & \cellbf{1.5157/0.9268}  & \cell{237}& \cell{241.210} & \cellbf{1.5158/0.9269} & \cell{ 264} & \cell{281.580} \\ \hline
\cell{EB2} & \cellbf{0.9023/0.8142}  & \cell{1}  & \cell{124.200} & \cellbf{0.9012/0.8142} & \cell{   1} & \cell{122.170} \\\hline
\cell{EB3} & \cellbf{0.9144/0.8143}  & \cell{1}  & \cell{123.470} & \cellbf{0.9137/0.8143} & \cell{   1} & \cell{126.810} \\\hline
\cell{NN2} & \cellbf{1.5652/2.4771}  & \cell{18} & \cell{20.540}  & \cellbf{1.5651/2.4811} & \cell{  24} & \cell{37.010} \\ \hline
\cell{NN4} & \cellbf{1.8778/2.0501}  & \cell{202}& \cell{154.49}  & \cellbf{1.8928/2.2496} & \cell{ 257} & \cell{139.900} \\ \hline
\cell{NN8} & \cellbf{2.3609/3.9999}  & \cell{21} & \cell{15.71}   & \cellbf{2.3383/4.5520} & \cell{  99} & \cell{68.700} \\ \hline
\cell{NN15}& \cellbf{0.0490/0.1366}  & \cell{24} & \cell{52.410}  & \cellbf{0.0488/0.1392} & \cell{  27} & \cell{49.940} \\ \hline
\cell{NN16}& \cellbf{0.3544/0.9569}  & \cell{108}& \cell{126.160} & \cellbf{0.3910/0.9573} & \cell{ 300} & \cell{405.340} \\ \hline
\end{tabular}
\end{scriptsize}
\end{center}
\end{table}
Here, $\mathcal{H}_2/\mathcal{H}_{\infty}$ are the $\mathcal{H}_2$ and $\mathcal{H}_{\infty}$ norms of the closed-loop systems for the static output feedback
controller, respectively.
With $\gamma = 10$, the computational results show that Algorithm \ref{alg:A1} satisfies the condition $\norm{P_{\infty}(s)}_{\infty} \leq \gamma = 10$ for all
the test problems. the problems AC11 and AC12 encounter a numerical problems that Algorithm \ref{alg:A1} can not solve. 
While, with $\gamma = 4$, there are $6$ problems reported infeasible, which are denoted by ``-''. The $\mathcal{H}_{\infty}$-constraint of three problems AC11
and NN8 is active with respect to $\gamma = 4$.

\section{Concluding remarks}
We have proposed a new iterative procedure to solve a class of nonconvex semidefinite programming problems. The key idea is to locally
approximate the nonconvex feasible set of the problem by an inner convex set. The convergence of the algorithm to a stationary point is investigated under
standard assumptions. We limit our applications to optimization problems with BMI constraints and provide a particular way to compute the inner psd-convex
approximation of a BMI constraint. Many applications in static output feedback controller design have been shown and two numerical examples have been
presented. Note that this method can be extended to solve more general nonconvex SDP problems where we can manage to find an inner psd-convex approximation of
the feasible set. This is also our future research direction.

\vskip 0.1cm
\noindent{\scriptsize
{\textbf{Acknowledgment. }
Research supported by Research Council KUL: CoE EF/05/006 Optimization in Engineering(OPTEC), IOF-SCORES4CHEM, GOA/10/009 (MaNet), GOA/10/11, several
PhD/postdoc and fellow grants; Flemish Government: FWO: PhD/postdoc grants, projects G.0452.04, G.0499.04, G.0211.05, G.0226.06, 
G.0321.06, G.0302.07, G.0320.08, G.0558.08, G.0557.08, G.0588.09,G.0377.09, research communities (ICCoS, ANMMM, MLDM); IWT: PhD Grants, Belgian
Federal Science Policy Office: IUAP P6/04; EU: ERNSI; FP7-HDMPC, FP7-EMBOCON, Contract Research: AMINAL. Other: Helmholtz-viCERP,
COMET-ACCM, ERC-HIGHWIND, ITN-SADCO.
}}

\end{document}